

\documentclass[12pt]{article}

\usepackage{amssymb, enumerate}
\usepackage{amsmath}
\usepackage{amsthm}

\hoffset -25truemm
\oddsidemargin=30truemm             %
\evensidemargin=25truemm            
\textwidth=155truemm                %
\voffset -25truemm
\topmargin=25truemm                 
\headheight=0truemm                 
\headsep=0truemm                    
\textheight=235truemm               
\delimiterfactor 800 \delimitershortfall 20 pt


\newcounter{itemcounter}
\numberwithin{itemcounter}{section}

\newtheorem{thm}[itemcounter]{Theorem}

\newtheorem{defi}[itemcounter]{Definition}
\newtheorem{prop}[itemcounter]{Proposition}
\newtheorem{cor}[itemcounter]{Corollary}

\newtheorem{con}[itemcounter]{Conjecture}

\newtheorem*{thm*}{Theorem}
\newtheorem*{con*}{Conjecture}
\newtheorem*{cor*}{Corollary}
\newtheorem*{ack*}{Acknowledgements}


\newcommand{\Irr}{\mathop{\rm Irr}\nolimits}
\newcommand{\IBr}{\mathop{\rm IBr}\nolimits}

\newcommand{\Aut}{\mathop{\rm Aut}\nolimits}
\newcommand{\Out}{\mathop{\rm Out}\nolimits}

\newcommand{\OF}{\mathop{\rm f_\mathcal{O}}\nolimits}
\newcommand{\SOF}{\mathop{\rm sf_\mathcal{O}}\nolimits}
\newcommand{\mf}{\mathop{\rm mf}\nolimits}

\newcommand{\nth}{\mathop{\rm th}\nolimits}

\newcommand{\lcm}{\mathop{\rm lcm}\nolimits}

\newcommand{\cO} {\mathcal{O}}

\newcommand{\NN} {\mathbb{N}}

\newenvironment{enumerate*}{%
     \begin{enumerate}%
     }%
    {\end{enumerate}}

\binoppenalty 10000

\emergencystretch 1 cm


\begin{document}

\title{Donovan's conjecture and blocks with abelian defect groups \footnote{This research was supported by the EPSRC (grant no. EP/M015548/1).} }

\author{Charles W. Eaton\footnote{School of Mathematics, University of Manchester, Manchester, M13 9PL, United Kingdom. Email: charles.eaton@manchester.ac.uk} and Michael Livesey\footnote{School of Mathematics, University of Manchester, Manchester, M13 9PL, United Kingdom. Email: michael.livesey@manchester.ac.uk}}

\date{24th February, 2018}
\maketitle


\begin{abstract}
We give a reduction of Donovan's conjecture for abelian groups to a similar statement for quasisimple groups. Consequently we show that Donovan's conjecture holds for abelian $2$-groups.
\end{abstract}


\section{Introduction}

Let $(K,\mathcal{O},k)$ be a $p$-modular system with $k$ algebraically closed. We are interested in the following conjecture in the case of abelian $p$-groups:

\begin{con}[Donovan]
\label{Donovan:conj}
Let $P$ be a finite $p$-group. Then amongst all finite groups $G$ and blocks $B$ of $kG$ with defect groups isomorphic to $P$ there are only finitely many Morita equivalence classes.
\end{con}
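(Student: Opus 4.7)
The plan is to reduce the conjecture to a statement about quasisimple groups and then attack that statement via the classification of finite simple groups. Given a pair $(G,B)$ with $B$ a block of $kG$ having defect group isomorphic to $P$, I would first reduce $G$: apply Fong--Reynolds to pass to covering blocks of stabilizers of blocks of $O_{p'}(G)$, so that $B$ is Morita equivalent to a block of a smaller group in which the relevant normal $p'$-subgroup is central; iterate down the chief series and use K\"ulshammer--Puig style reductions to arrive at a situation where the quasisimple components of the generalized Fitting subgroup control $B$, and the central $p'$-data and outer-automorphism-action data needed to recover $B$ is bounded in terms of $P$. The target is a theorem of the form: for each fixed $P$, Donovan's conjecture for all blocks with defect group $P$ follows from Donovan's conjecture for blocks of central extensions of quasisimple groups whose defect groups lie in a finite list determined by $P$.

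For the quasisimple case I would argue family by family using the classification. Sporadic and alternating groups are handled directly (for alternating groups via the Nakayama conjecture, which parametrizes blocks by cores and weights, and bounds the weight in terms of $|P|$). Groups of Lie type in the defining characteristic $p$ have Sylow $p$-subgroups as defect groups, and only finitely many such groups have a given Sylow, so only finitely many Morita classes arise there. The substantial case is groups of Lie type in non-defining characteristic: here I would invoke Jordan decomposition of blocks (Bonnaf\'e--Rouquier, Bonnaf\'e--Dat--Rouquier) to reduce to quasi-isolated and in particular unipotent blocks, and then appeal to generic block theory (Brou\'e--Malle--Michel) to control the number of Morita classes of unipotent blocks sharing the fixed defect group $P$ as the rank of the ambient algebraic group varies.

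For abelian $P$ one would bring in Brou\'e's abelian defect group conjecture, which predicts a derived equivalence between $B$ and its Brauer correspondent in $kN_G(P)$; upgrading this to a splendid Rickard equivalence and tracking source algebras provides control on Morita classes via local data, and this is the mechanism behind the partial reductions foreshadowed in the abstract. In the non-abelian case no such analog is available, and a plausible alternative route is through Puig's conjecture on finiteness of source algebra equivalence classes, exploiting the fact that there are only finitely many saturated fusion systems on any finite $p$-group and then controlling the possible twistings by a bounded cohomological invariant.

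The main obstacle is precisely the non-abelian case: I expect the hardest step to be non-principal blocks of quasisimple groups of Lie type in cross characteristic with non-abelian $P$, where the $p$-local structure is not concentrated in a single abelian subgroup and the existing derived-equivalence machinery simply does not apply. Even the Clifford-theoretic reduction to quasisimple groups is delicate for general $p$ and currently requires auxiliary compatibility hypotheses (for example, perfect isometry data for covered blocks) that would have to be established uniformly in $P$; removing these, and finding a replacement for Brou\'e's conjecture when $P$ is non-abelian, is where the bulk of the new work would lie.
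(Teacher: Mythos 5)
This statement is a conjecture, and the paper does not prove it --- nor does your proposal. What you have written is a research programme with several steps that are themselves open problems, and you acknowledge as much; that is a concrete gap, not a stylistic quibble. In particular, your reduction to quasisimple groups ``with auxiliary compatibility hypotheses that would have to be established uniformly in $P$'' is exactly the part that is not known in general: the known Clifford-theoretic reductions (Fong--Reynolds, K\"ulshammer--Puig) break down at normal subgroups of index $p$, where no general relationship between the Morita class of a block and that of a covered block is available. Your appeal to Brou\'e's abelian defect group conjecture and to generic block theory for cross-characteristic unipotent blocks likewise replaces one open problem by others. So the proposal cannot be accepted as a proof of the statement.

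It is worth contrasting your outline with what the paper actually establishes, which is strictly weaker than the conjecture. The paper's strategy is to split Donovan's conjecture, via Kessar's theorem, into the Weak Donovan conjecture (bounding Cartan invariants) plus a bound on Morita--Frobenius numbers. For abelian defect groups the Cartan bound is already reduced to quasisimple groups by D\"uvel; the paper's new contribution is to reduce the Morita--Frobenius bound to a bound on the \emph{strong $\cO$-Frobenius number} $\SOF$ of blocks of quasisimple groups. The point of $\SOF$ is precisely that it behaves well under passage to normal subgroups of index $p$ when the defect groups are abelian (Proposition \ref{morita_O_proposition}(iii)), which circumvents the index-$p$ obstruction that your outline does not address. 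The full conjecture is then obtained only for abelian $2$-groups, by running the EKKS classification of $2$-blocks of quasisimple groups with abelian defect groups through this reduction. Nothing in the paper, and nothing in your proposal, handles non-abelian $P$ or odd $p$ in general. If you want to salvage your write-up, recast it as a proof of Theorem \ref{maintheorem} (or of the reduction Theorem \ref{reduce:theorem}) rather than of Conjecture \ref{Donovan:conj}, and replace the appeals to Brou\'e's conjecture with the $\SOF$ machinery or an equivalent device for crossing index-$p$ normal subgroups.
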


One approach to the conjecture is reduction to quasisimple groups followed by the application of the classification of finite simple groups. For example, in~\cite{ekks14} it was proved that Donovan's conjecture holds for elementary abelian $2$-groups in this way, following a partial reduction in~\cite{du04}. The reason that this result could not be extended to arbitrary abelian $2$-groups is that it was not known how Morita equivalence classes of blocks relate to those for blocks of normal subgroups of index $p$ in general, with only the special case of a split extension being known by~\cite{kk96}. Our approach uses~\cite{ke05}, where it was shown that for each $p$-group $P$, Donovan's conjecture is equivalent to both of the following conjectures holding, the first originating from a question of Brauer:

\begin{con}[Weak Donovan]
\label{weak_Donovan}
Let $P$ be a finite $p$-group. Then there is $c(P) \in \NN$ such that if $G$ is a finite group and $B$ is a block of $kG$ with defect groups isomorphic to $P$, then the entries of the Cartan matrix of $B$ are at most $c(P)$.
\end{con}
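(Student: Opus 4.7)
The plan is to pursue a Clifford-theoretic reduction to quasisimple groups, followed by an appeal to the classification of finite simple groups. Using Fong--Reynolds, one can replace $B$ by a Morita equivalent block of a subgroup and assume that $B$ covers a $G$-stable block of every normal subgroup; this reduction preserves both the defect group and, up to the standard identification, the Cartan matrix. Iterating and invoking Dade's results on extensions, one cuts down to the situation where $F^*(G)$ is a central product of quasisimple components acted on transitively by $G$ modulo a central $p$-subgroup, and $B$ is covered by a block of this layer with the same defect group up to a central factor.

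From here, the task is to bound the Cartan entries of the covered block of the layer in terms of $P$. By the classification, the quasisimple factors fall into the usual families (alternating, sporadic, and groups of Lie type in defining and non-defining characteristic), and for each family one tries to extract bounds on decomposition numbers from the standard descriptions: combinatorial for alternating and their covers, Deligne--Lusztig theory and $d$-Harish-Chandra series for groups of Lie type, and a finite check for sporadic groups. The Cartan entries of $B$ itself are then recovered from $C_B = D_B^\top D_B$, using bounds on $D_B$ imported from the covered block via the Clifford correspondence.

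The principal obstacle is the step for quasisimple groups of Lie type of unbounded rank: uniform bounds on Cartan entries in terms of the defect group alone, independent of the ambient rank, seem to lie at the same depth as Brou\'e's abelian defect group conjecture or Puig's finiteness conjecture on source algebras. In the abelian defect setting a cleaner alternative would be to try to prove Brou\'e's conjecture through a CFSG-driven case analysis: a derived equivalence between $B$ and its Brauer correspondent $b \in \Blk(N_G(P))$ identifies the two Cartan matrices up to signed permutation, and since $b$ has normal defect group $P$ its Cartan entries are visibly bounded by a function of $|P|$, which would yield Conjecture~\ref{weak_Donovan} at least in the abelian case.
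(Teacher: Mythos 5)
You have been asked to prove something that is not a theorem of the paper: Conjecture~\ref{weak_Donovan} (Weak Donovan, due originally to Brauer's question) is stated as an open conjecture, and the paper never claims to prove it in general. What the paper does is (a) cite D\"uvel~\cite{du04} for a reduction of Weak Donovan for abelian defect groups to blocks of quasisimple groups, (b) use Kessar's Theorem~\ref{kessar} to split Donovan's conjecture into Weak Donovan plus a bound on Morita--Frobenius numbers, and (c) in the case $p=2$ with abelian defect, verify the quasisimple input via the explicit classification of $2$-blocks with abelian defect groups of quasisimple groups in~\cite{ekks14}. So the only instance of Conjecture~\ref{weak_Donovan} actually established here (implicitly, through Theorem~\ref{maintheorem}) is for abelian $2$-groups.

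Your proposal is therefore not a proof, and to your credit you say so yourself. The Clifford-theoretic reduction you sketch (Fong--Reynolds, iterated passage to a quasiprimitive block, reduction to the generalized Fitting subgroup and hence to quasisimple components) is genuinely the same reduction strategy the paper leans on, in the form of D\"uvel's theorem and Proposition~\ref{duvel}; the paper simply cites it rather than re-proves it. The gap you identify is exactly the real one: after the reduction, one still needs uniform bounds on Cartan entries for blocks of quasisimple groups of Lie type of unbounded rank in terms of the defect group alone, and no argument in the paper or in the literature supplies this in general. Your fallback suggestion to obtain the abelian-defect case from Brou\'e's conjecture is circular in the sense that Brou\'e's conjecture is itself open in general; the paper deliberately avoids this and instead works case-by-case from the CFSG-based classification in~\cite{ekks14}, which is why the final result (Theorem~\ref{maintheorem}) is restricted to $p=2$. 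The honest conclusion is that the statement as posed remains a conjecture, and any attempt at a ``proof'' must either restrict to a class of defect groups where the quasisimple analysis has been completed (as the paper does for abelian $2$-groups), or take the quasisimple bound as a hypothesis (as Theorem~\ref{reduce:theorem} does).
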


For a finite group $G$ and $n\in\mathbb{Z}$, let $-^{(p^n)}$ be the ring automorphism of $kG$ defined by $(\sum_{g \in G} a_g g)^{(p^n)} = \sum_{g \in G} (a_g)^{p^n} g$. Note that $-^{(p^n)}$ permutes the blocks of $kG$. Define the Morita-Frobenius number $\mf(B)$ of a block $B$ of $kG$ to be the smallest $n \in \NN$ such that $B^{(p^n)}$ is Morita equivalent to $B$. An equivalent definition is given in Definition \ref{morita_frobenius:def}.

\begin{con}[Kessar]
\label{Kessar:conj}
Let $P$ be a finite $p$-group. Then there is $m(P) \in \NN$ such that if $G$ is a finite group and $B$ is a block of $kG$ with defect groups isomorphic to $P$, then $\mf(B) \leq m(P)$.
\end{con}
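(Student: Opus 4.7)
The plan is to establish Kessar's conjecture by the same philosophy that the paper announces for Donovan's conjecture itself: reduce to quasisimple groups and then appeal to the classification of finite simple groups. Because $\mf(B)$ is invariant under Morita equivalence (as the equivalent definition promised in Definition \ref{morita_frobenius:def} will presumably make manifest), I would work throughout with Morita representatives---basic algebras or source algebras---so that each reduction step need only preserve the Morita class while tracking how the Galois twist $-^{(p)}$ interacts with it.

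The first stage is to collect tools showing that $\mf$ is well-behaved under standard block-theoretic constructions: Fong--Reynolds reduction to the stabilizer of a covered block of a normal subgroup, passage across a $p'$-central quotient, and passage through central extensions by $p'$-groups. Combined with Clifford theory, these tools ought to reduce the problem to a finite group $G$ with generalized Fitting subgroup of the form $F^*(G) = Z(G) E$, where $E$ is a central product of quasisimple groups, and where the defect group of $B$ projects to an abelian defect group in each quasisimple component. The second stage is the quasisimple case itself, handled via CFSG: for sporadic and alternating groups (and their covers) the bound follows from finiteness of the Morita landscape in any fixed defect; for groups of Lie type in the defining characteristic, the relevant blocks are essentially those of Levi subgroups, defined over a small finite field; for non-defining characteristic one invokes Bonnaf\'e--Rouquier style Morita equivalences to descend to Levi subgroups whose blocks have controlled fields of definition.

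The main obstacle will be the reduction stage, specifically controlling $\mf$ through non-split normal extensions of index $p$---the introduction explicitly flags this as the point where earlier partial reductions were stuck, with only split extensions (\cite{kk96}) and elementary abelian $2$-groups (\cite{ekks14}) handled previously. I would expect the essential new ingredient to take the form of a statement that a Morita equivalence between blocks of a normal subgroup of index $p$ can be lifted to the ambient group in a sufficiently canonical (in particular, Galois-equivariant) manner to propagate bounds on $\mf$ from the normal subgroup to the full group. Once this bridge is in place, iterating up a normal series reduces Kessar's conjecture for arbitrary finite groups to the quasisimple case, at which point CFSG finishes the argument.
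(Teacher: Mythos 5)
The statement you are attempting to prove is Conjecture \ref{Kessar:conj}, which is still open: the paper never proves Kessar's conjecture in full generality, and your sketch does not identify a genuinely new ingredient that would close it. What the paper actually establishes is more modest---a reduction for \emph{abelian} defect groups to a related (stronger) statement about quasisimple groups (Theorem \ref{reduce:theorem}), and a complete proof only for abelian $2$-groups (Theorem \ref{maintheorem}), where the classification in~\cite{ekks14} supplies the quasisimple input. Your plan to run the reduction plus CFSG for arbitrary $P$ is the right philosophy, but as stated it is a claim to have solved an open problem without a mechanism.

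The decisive gap is exactly where you flag it: the bridge across a normal subgroup $N \lhd G$ of index $p$. Your proposed remedy is to lift a Morita equivalence between blocks of $N$ to $G$ in a ``Galois-equivariant'' way, but no such lifting theorem is available, and this is not what the paper does. The paper instead \emph{replaces} $\mf$ by the strong $\cO$-Frobenius number $\SOF$ (Definition \ref{morita_frobenius:def}), an invariant with two crucial properties: it dominates $\mf$ (Proposition \ref{morita_O_proposition}(i)), and---for \emph{abelian} defect groups only---it is monotone down index-$p$ extensions, $\SOF(B) \leq \SOF(b)$ (Proposition \ref{morita_O_proposition}(iii), from~\cite{el18}). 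This monotonicity is proved by direct analysis of the $\cO$-algebra and its character theory, not by lifting a bimodule, and it rests on Proposition \ref{abelian_defect_action:prop}, which uses~\cite{kk96} and requires $D$ abelian to conclude $D \leq G[b]$. So even the reduction step you take for granted is not known outside the abelian case. Moreover, since $\SOF$ is strictly stronger than $\mf$, the paper is explicit (end of Section 1 and the remark before Proposition \ref{abelian_defect_action:prop}) that this is \emph{not} a reduction of Kessar's conjecture itself to quasisimple groups; it is a reduction of a different, harder bounding problem. Finally, the CFSG endgame you outline (Bonnaf\'e--Rouquier descent, Levi subgroups, etc.) has not been carried out for general $p$: the paper only verifies the required $\SOF$ bound for $p=2$ and abelian $D$ via Theorem \ref{component_classification}, case by case. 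For odd $p$, and for non-abelian $P$ at any prime, the problem remains open.
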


Our approach is to use the related notion of the \emph{strong $\cO$-Frobenius} number $\SOF(B)$ of a block $B$ as defined in~\cite{el18} and show that the question of uniformly bounding the Morita-Frobenius number may be reduced to bounding the strong $\cO$-Frobenius number for quasisimple groups. Conjecture \ref{weak_Donovan} for abelian groups is already reduced to blocks of quasisimple groups in~\cite{du04}.


Combining these two reductions we obtain our main result, which is that Donovan's conjecture for abelian defect groups reduces to bounding the strong $\cO$-Frobenius number and Cartan invariants for blocks of quasisimple groups:

\begin{thm}
\label{reduce:theorem}
If there are functions $s,c:\mathbb{N}\to\mathbb{N}$ such that for all $\cO$-blocks $B$ of quasisimple groups with abelian defect groups of order $p^d$, $\SOF(B) \leq s(d)$ and all Cartan invariants are at most $c(d)$, then Donovan's conjecture holds for $k$-blocks with abelian defect groups.
\end{thm}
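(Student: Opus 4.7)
The plan is to leverage the equivalence from \cite{ke05}: for each finite $p$-group $P$, Donovan's conjecture for defect group $P$ is equivalent to Weak Donovan (Conjecture \ref{weak_Donovan}) combined with Kessar's conjecture (Conjecture \ref{Kessar:conj}) for $P$. Accordingly, for each $d$ I would seek uniform bounds (depending only on $d$) on both the Cartan invariants and on $\mf(B)$ across all $k$-blocks $B$ with abelian defect of order $p^d$, and then invoke \cite{ke05} with $P$ ranging over the (finitely many) abelian $p$-groups of order $p^d$.

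The Cartan invariant half is essentially in the literature: the reduction of \cite{du04} shows that Weak Donovan for abelian defect of order $p^d$ follows from the corresponding bound on $\cO$-blocks of quasisimple groups with abelian defect of order $p^d$, and the hypothesis $c(d)$ supplies exactly this. So this half requires little beyond citing \cite{du04}.

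The substantive task is bounding $\mf(B)$. Since by construction $\mf(B) \leq \SOF(B)$, it suffices to bound $\SOF(B)$ uniformly in $d$. The key step I would need is a reduction theorem: if $\SOF$ is bounded on $\cO$-blocks of quasisimple groups with abelian defect of order $p^d$, then $\SOF$ is bounded (in terms of $d$ alone) on all $\cO$-blocks with such defect. The strategy is the standard Clifford-theoretic descent --- reduce first to quasi-primitive blocks, then via a source algebra equivalence to the case $O_{p'}(G) \leq Z(G)$, and finally analyse the chief factors of $G/O_{p'}(G)$ inductively, handling extensions by a simple group using the classification together with the assumed quasisimple bound.

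The principal obstacle --- and the reason the analogous result was previously available only for elementary abelian $2$-groups via \cite{ekks14} --- is controlling the behaviour under a normal subgroup of index $p$. This is precisely the gap noted in the introduction: \cite{kk96} handles only split extensions, and Morita equivalence is not in general known to descend through an arbitrary index-$p$ extension. The whole rationale for working with $\SOF$ instead of $\mf$ is that strong $\cO$-Frobenius numbers should have the functoriality to pass through an arbitrary index-$p$ extension without a splitting hypothesis, at the cost of a controlled change that is absorbed into a bound depending only on $d$. Verifying this passage is the technical heart of the argument; once in hand, combining the resulting $\SOF$ bound with the Cartan bound and applying \cite{ke05} delivers Donovan's conjecture for $k$-blocks with abelian defect.
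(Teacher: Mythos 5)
Your proposal captures the paper's overall strategy faithfully: bound the Cartan invariants via D\"uvel, bound $\mf$ by bounding $\SOF$, and combine via Kessar's Theorem \ref{kessar}. The use of $\SOF$ precisely because it descends through arbitrary index-$p$ normal subgroups (Proposition \ref{morita_O_proposition}(iii)) is exactly the paper's rationale. However, the reduction step in the middle, which you flag as the technical heart, is described too loosely, and the place where it would fail as sketched is not the index-$p$ case you single out but the $p'$-quotient case you do not address.

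The paper does not inductively analyse chief factors of $G/O_{p'}(G)$ or handle ``extensions by a simple group'' one at a time. Instead, it uses D\"uvel's reduction (Proposition \ref{duvel}) to pass to a block $B$ of a group $G$ with $G=\langle D^g : g\in G\rangle$, all components normal, $F(G)=Z(G)$, and --- crucially --- the extra condition (vi): for every characteristic subgroup $H$ and the $G$-stable block $b$ of $kH$ covered by $B$, one has $G[b]=G$. This last condition is obtained from Proposition \ref{abelian_defect_action:prop}, which says an abelian defect group acts by inner automorphisms on stable blocks of normal subgroups; combined with $G=\langle D^g\rangle$ and $G[b]\lhd G$ it gives $G[b]=G$. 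That is the ingredient that makes the $p'$-steps work: in the $p$-$p'$-series of $G/E(G)$ (which is solvable by Schreier), a $p'$-quotient gives a genuine Morita equivalence between $B_i$ and $B_{i+1}$ only because $G_{i+1}[B_i]=G_{i+1}$ (citing \cite{kkl12}). Without this, $\SOF$ would not be controlled through $p'$-layers. The quasisimple hypothesis is then applied in one shot to the layer $E(G)=L_1\cdots L_t$, lifted to $L_1\times\cdots\times L_t$ by killing a central $p'$-group and using Proposition \ref{morita_O_proposition}(iv), and assembled via the tensor-product bound (v). So the omission in your sketch is the role of $G[b]=G$, established via the abelian-defect-acts-inner result, in making the $p'$-quotient descent $\SOF$-preserving; and the decomposition is layer-plus-solvable-quotient, not a chief-factor induction.
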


Donovan's conjecture may also be stated for $\cO$-blocks. Theorem \ref{reduce:theorem} may be phrased in terms of this stronger conjecture as follows, but we are not able to reduce it to quasisimple groups:

\begin{cor}
If Donovan's conjecture holds for $\cO$-blocks of quasisimple groups with abelian defect groups, then it holds for all $k$-blocks with abelian defect groups.
\end{cor}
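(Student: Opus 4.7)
The plan is to derive the hypotheses of Theorem~\ref{reduce:theorem} directly from Donovan's conjecture applied to $\cO$-blocks of quasisimple groups, and then apply that theorem.

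First, I would confirm that the two invariants appearing in Theorem~\ref{reduce:theorem}, namely the strong $\cO$-Frobenius number $\SOF(B)$ and each individual Cartan invariant of $B$, are constant on Morita equivalence classes of $\cO$-blocks. For Cartan entries this is classical: the Cartan matrix is, up to simultaneous permutation of rows and columns, an invariant of the Morita equivalence class (already invariant under $k$-Morita equivalence). For $\SOF$ this is where I expect a technical check: using the equivalent formulation signalled in Definition~\ref{morita_frobenius:def}, $\SOF(B)$ should be definable purely in terms of data preserved by $\cO$-Morita equivalence (for instance from the basic algebra of $B$ together with the twist by $-^{(p^n)}$), so that two $\cO$-Morita equivalent blocks share the same strong $\cO$-Frobenius number.

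Assuming this invariance, fix $d \in \NN$ and consider the family $\mathcal{F}_d$ of $\cO$-blocks of quasisimple groups whose defect groups are abelian of order $p^d$. The hypothesis of the corollary, namely Donovan's conjecture for $\cO$-blocks of quasisimple groups with abelian defect groups, implies that $\mathcal{F}_d$ splits into only finitely many Morita equivalence classes. On each class the quantities $\SOF(B)$ and the maximal Cartan entry of $B$ are constants, so taking the maximum over these finitely many classes yields integers $s(d)$ and $c(d)$. Letting $d$ vary defines functions $s,c:\NN \to \NN$ satisfying the hypotheses of Theorem~\ref{reduce:theorem}.

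Applying Theorem~\ref{reduce:theorem} with these choices of $s$ and $c$ then yields Donovan's conjecture for $k$-blocks with abelian defect groups, as required. The one genuinely substantive point is the Morita invariance of $\SOF$ over $\cO$; everything else is a packaging argument. Moreover, even if such invariance were to fail in full generality, it would suffice to establish it for the blocks actually arising here (those of quasisimple groups with abelian defect), so the scope of any needed verification is narrow.
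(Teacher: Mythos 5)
Your argument is correct and is, in essence, the intended derivation; the paper states the corollary without proof precisely because it follows directly from Theorem~\ref{reduce:theorem} in the way you describe. The one point you flag as a possible gap — the invariance of $\SOF$ under $\cO$-Morita equivalence — is in fact already supplied by Proposition~\ref{morita_O_proposition}(ii) (cited from~\cite{el18}), so no further technical check is needed; combined with the classical Morita invariance of the Cartan matrix and the finiteness of abelian $p$-groups of each fixed order $p^d$, your packaging argument producing $s$ and $c$ goes through exactly as written.
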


For $p=2$ we then apply~\cite{ekks14} to show:

\begin{thm}
\label{maintheorem}
Donovan's conjecture holds for abelian $2$-groups.
\end{thm}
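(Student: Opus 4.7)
The plan is to apply Theorem \ref{reduce:theorem} in the case $p=2$: the entire task then reduces to exhibiting two functions $s,c\colon\mathbb{N}\to\mathbb{N}$ with the property that for every $\cO$-block $B$ of a quasisimple group with an abelian defect group of order $2^d$, one has $\SOF(B)\leq s(d)$ and every Cartan entry of $B$ is at most $c(d)$.

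Both bounds I would establish via the classification of finite simple groups. For the Cartan entries, Conjecture \ref{weak_Donovan} restricted to abelian defect was already reduced to quasisimple groups in \cite{du04}, and for $p=2$ the requisite quasisimple analysis is carried out in \cite{ekks14}: there the classification is used to show that quasisimple blocks with abelian $2$-defect fall into a short and explicit list (defect group Sylow in a group of Lie type in defining characteristic $2$, or one of a few tractable low-rank or sporadic cases). Inspection of this list yields the desired uniform bound $c(d)$ on Cartan entries, and the same case analysis gives a uniform bound on the Morita-Frobenius number, and hence on $\SOF$, of each such block.

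The main obstacle is that \cite{ekks14} is stated for \emph{elementary} abelian $2$-groups, whereas Theorem \ref{reduce:theorem} demands bounds over all abelian $2$-groups. The point I would press is that the quasisimple classification underlying \cite{ekks14} does not use the elementary abelian hypothesis in any essential way: the enumeration of quasisimple blocks with abelian $2$-defect is obtained without restricting the exponent of the defect group, and the resulting bounds on $\SOF$ and on Cartan entries depend only on the order of the defect group, arising from the intrinsic block structure of each case. Assembling these into the required functions $s$ and $c$ and feeding them into Theorem \ref{reduce:theorem} then yields Donovan's conjecture for all abelian $2$-groups.
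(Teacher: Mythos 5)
Your overall strategy matches the paper's: invoke Theorem \ref{reduce:theorem} with $p=2$ and verify the two hypotheses by running through the classification of $2$-blocks of quasisimple groups with abelian defect given in~\cite{ekks14}. You are also right that the quasisimple analysis in~\cite{ekks14} (Propositions 5.3, 5.4 and Theorem 6.1 there) applies to arbitrary abelian $2$-defect, not merely elementary abelian, so the restriction to elementary abelian in the final result of~\cite{ekks14} is an artifact of the reduction step, not of the quasisimple classification.

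However, there is a genuine error in how you close the argument: you write that the case analysis gives ``a uniform bound on the Morita-Frobenius number, and \emph{hence} on $\SOF$.'' That implication is backwards. By Proposition \ref{morita_O_proposition}(i) one has $\mf(kB)\leq\OF(B)\leq\SOF(B)$, so a bound on $\mf$ says nothing about $\SOF$; indeed the strong $\cO$-Frobenius number was introduced precisely because it is a \emph{stronger} invariant than $\mf$, chosen because it behaves well under passing to normal subgroups of index $p$. What you actually need, and what the paper does, is to bound $\SOF$ directly in each case of the classification: in cases (i), (iv), (v) the block is Morita equivalent to a principal block and Proposition \ref{morita_O_proposition}(vii) gives $\SOF(B)\leq(|D|^2)!$; in case (ii) one checks $B=B^{(2)}$ so again Proposition \ref{morita_O_proposition}(i),(ii) give $\SOF(B)\leq(|D|^2)!$; in case (iii), nilpotent covered implies (by~\cite{pu11}) Morita equivalence with a block having a normal defect group, and then Proposition \ref{morita_O_proposition}(vi) bounds $\SOF(B)$ in terms of $|\Aut(D)|_{2'}$. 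Your proposal would be sound if you replaced the appeal to an $\mf$ bound by this direct case-by-case $\SOF$ bound; as written, the key inequality points the wrong way.
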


To the authors' knowledge, the Weak Donovan conjecture and bounding the strong $\cO$-Frobenius numbers of blocks of quasisimple groups are not sufficient to prove that the $\cO$ version of Donovan's conjecture holds for abelian $2$-groups.

The structure of the paper is as follows. In Section 2 we recall some preliminaries about strong $\cO$-Frobenius numbers from~\cite{el18} and Donovan's conjecture. In Section 3 we gather together the reductions of D\"{u}vel~\cite{du04} and the authors~\cite{el18} to ultimately obtain a reduction for Donovan's conjecture, in the case of abelian defect groups, to blocks of quasisimple groups. In Section 4 we use this reduction to prove Donovan's conjecture for abelian defect groups in characteristic $2$.


\section{Strong $\cO$-Frobenius numbers and Donovan's conjecture}

In this section we recall the definition of the strong $\cO$-Frobenius number, collect together some of its properties and describe how it relates to Donovan's conjecture. Let $G$ be a finite group and $B$ a block of $\cO G$. We denote by $\Irr(G)$ the set of irreducible characters of $G$ and $\Irr(B)$ the subset of $\Irr(G)$ of irreducible characters lying in the block $B$. We write $kB$ for the block of $kG$ corresponding to $B$ and $KB$ for the $K$-subspace of $KG$ generated by $B$. We denote by $e_B\in\cO G$ the block idempotent corresponding to $B$ and $e_{kB}\in kG$ the block idempotent corresponding to $kB$. Finally for each $\chi\in\Irr(G)$ we denote by $e_\chi\in KG$ the character idempotent corresponding to $\chi$. Note that $KB=\bigoplus_{\chi\in\Irr(B)}KGe_\chi$. If $A$ and $B$ are finitely generated $R$-algebras for $R \in \{ K,\cO ,k \}$, we write $\mod(A)$ for the category of finitely generated $A$-modules and $A\sim_{\operatorname{Mor}} B$ if $\mod(A)$ and $\mod(B)$ are (Morita) equivalent as $R$-linear categories.
\newline
\newline
We quote the following definition from~\cite[Definition 3.2]{el18}.

\begin{defi}
\label{twist:def}
Let $q$ be a, possibly zero or negative, power of $p$. We denote by $-^{(q)}:k\to k$ the field automorphism given by $\lambda\mapsto\lambda^{\frac{1}{q}}$. Let $A$ be a $k$-algebra. We define $A^{(q)}$ to be the $k$-algebra with the same underlying ring structure as $A$ but with a new action of the scalars given by $\lambda.a=\lambda^{(q)}a$, for all $\lambda\in k$ and $a\in A$. For $a\in A$ we define $a^{(q)}$ to be the element of $A$ associated to $a$ through the ring isomorphism between $A$ and $A^{(q)}$. For $M$ an $A$-module we define $M^{(q)}$ to be the $A^{(q)}$-module associated to $M$ through the ring isomorphism between $A$ and $A^{(q)}$.

Note that for $G$ a finite group, we have $kG\cong kG^{(q)}$ as we can identify $-^{(q)}:kG\to kG$ with the ring isomorphism:
\begin{align*}
-^{(q)}:kG&\to kG\\
\sum_{g\in G}\alpha_gg&\mapsto\sum_{g\in G}(\alpha_g)^qg.
\end{align*}
If $B$ is a block of $kG$ then we can and do identify $B^{(q)}$ with the image of $B$ under the above isomorphism.

By an abuse of notation we also use $-^{(q)}$ to denote the field automorphism of the universal cyclotomic extension of $\mathbb{Q}$ defined by $\omega_p\omega_{p'}\mapsto\omega_p\omega_{p'}^{\frac{1}{q}}$, for all $p^{\nth}$-power roots of unity $\omega_p$ and $p'^{\nth}$ roots of unity $\omega_{p'}$. If $\chi\in\Irr(G)$, then we define $\chi^{(q)}\in\Irr(G)$ to be given by $\chi^{(q)}(g)=\chi(g)^{(q^{-1})}$ for all $g\in G$ and we define $\varphi^{(q)}$ for $\varphi\in\IBr(G)$ in an analogous way. Note that if $S$ is a simple $kG$ module with Brauer character $\varphi$, then $S^{(q)}$ is a simple $kG$ module with Brauer character $\varphi^{(q)}$. If $B$ is a block of $\cO G$ with $\chi\in\Irr(B)$, then we define $B^{(q)}$ to be the block of $\cO G$ with $\chi^{(q)}\in\Irr(B^{(q)})$. By considering a simple $B$-module and the decomposition matrix of $\cO G$ we see that $(kB)^{(q)}=k(B^{(q)})$, in particular $B^{(q)}$ is well-defined.
\end{defi}

\begin{defi}
\label{morita_frobenius:def}
The \textbf{Morita Frobenius number} $\mf(A)$ of a finite dimensional $k$-algebra $A$ is the smallest integer $n$ such that $A\sim_{\operatorname{Mor}}A^{(p^n)}$ as $k$-algebras. Let $G$ be a finite group and $B$ a block of $\cO G$. The \textbf{$\cO$-Frobenius number} $\OF(B)$ of $B$ is the smallest integer $n$ such that $B\cong B^{(p^n)}$ as $\cO$-algebras and the \textbf{strong $\cO$-Frobenius number} $\SOF(B)$ of $B$ is the smallest integer $n$ such that there exists an $\cO$-algebra isomorphism $B\to B^{(p^n)}$ such that the induced bijection of characters is given by $\chi\mapsto\chi^{(p^n)}$ for all $\chi\in\Irr(B)$.
\end{defi}

We gather together some results concerning the invariants defined above:

\begin{prop}
\label{morita_O_proposition}
Let $G$, $H$ and $N$ be finite groups with $N \lhd G$, and let $B$, $C$ and $b$ be blocks of $\cO G$, $\cO H$ and $\cO N$ respectively with $B$ covering $b$. Let $D$ be a defect group for $B$.

\begin{enumerate}[(i)]
\item $\mf(kB)\leq\OF(B)\leq\SOF(B)\leq|D|^2!\OF(B)$.
\item If $B$ and $C$ are Morita equivalent, then $\SOF(B)=\SOF(C)$.
\item If $B$ has abelian defect groups and $[G:N]=p$, then $\SOF(B)\leq\SOF(b)$.
\item If $Z \leq G$ is a central $p$-subgroup and $B_Z$ is the unique block of $\cO (G/Z)$ corresponding to $B$ via the natural homomorphism $\cO G \rightarrow \cO (G/Z)$, then $\SOF(B_Z) \leq \SOF(B)$.
\item $\SOF(B \otimes_{\cO} C) \leq \lcm\{\SOF(B),\SOF(C)\}$, viewing $B \otimes_{\cO} C$ as a block of $\cO(G\times H)$.
\item If $D \lhd G$, then $\SOF(B) \leq n^{\frac{1}{2} (\log_2(n)-1)}$, where $n=|\Aut(D)|_{p'}$.
\item If $B$ is Morita equivalent to a principal block, then $\SOF(B) \leq (|D|^2)!$.

\end{enumerate}
\end{prop}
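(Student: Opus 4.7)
The proposition collects seven assertions about $\SOF$; I would handle them in order of increasing difficulty, with (iii) the main technical challenge. The inequalities in (i) are essentially formal: reducing modulo $p$ converts an $\cO$-algebra isomorphism $B\cong B^{(p^n)}$ into a $k$-algebra isomorphism (hence Morita equivalence) $kB\cong(kB)^{(p^n)}$, and any strong $\cO$-Frobenius isomorphism is in particular an $\cO$-algebra isomorphism. For the upper bound $\SOF(B)\leq|D|^2!\cdot\OF(B)$, I would take a witnessing isomorphism $\varphi\colon B\to B^{(p^{\OF(B)})}$ and consider the induced bijection on $\Irr(B)$: writing it as $\chi\mapsto\sigma(\chi)^{(p^{\OF(B)})}$ with $\sigma\in\mathrm{Sym}(\Irr(B))$, iterating $\varphi$ appropriately yields an isomorphism $B\to B^{(p^{m\OF(B)})}$ inducing $\sigma^m$; taking $m$ to be the order of $\sigma$ gives the desired strong isomorphism, and $m\leq|\Irr(B)|!\leq|D|^2!$ by the Brauer--Feit bound on $k(B)$.

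Parts (iv) and (v) are also essentially formal: (iv) uses the natural surjection $\cO G\to\cO(G/Z)$ to descend an isomorphism realising $\SOF(B)$ to one realising $\SOF(B_Z)$, once one checks that the induced character bijection remains compatible with the Galois map; (v) follows from $(B\otimes_\cO C)^{(p^n)}\cong B^{(p^n)}\otimes_\cO C^{(p^n)}$ together with $\Irr(G\times H)=\Irr(G)\times\Irr(H)$, so that a simultaneous power of $p$ dividing both Frobenius numbers works for the tensor product. For (ii), any Morita equivalence of $\cO$-blocks can be transported along the Frobenius twist functor applied to the progenerator bimodule; the Galois compatibility on characters is preserved because an $\cO$-linear Morita equivalence comes with a canonical bijection of character sets compatible with $-^{(p^n)}$. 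Part (vi) reduces, via the structure theorem for blocks with normal defect group (K\"ulshammer), to an explicit computation in a twisted group algebra $\cO_\alpha(D\rtimes E)$ with $E\leq\Aut(D)_{p'}$ of order dividing $n$, where the stated bound falls out of controlling the order of the twist cocycle. For (vii), the canonical identification of a principal block with its Frobenius twist is already Galois-equivariant on characters (being induced by the identity on $G$), so principal blocks have $\SOF$ bounded by $1$, and (ii) carries this through any Morita equivalence.

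The essential difficulty is (iii). My plan is to apply a Fong--Reynolds reduction to assume $b$ is $G$-stable, so that $B$ is a crossed product of $b$ by the cyclic quotient $G/N$ of order $p$, and then to exploit the abelian defect hypothesis to control the source-algebra structure of $b$ sufficiently to lift a witnessing isomorphism for $b$ to one for $B$. The delicate point, and where the abelian hypothesis is essential, is ensuring that the extended isomorphism respects not merely the $\cO$-algebra structure but also the induced character bijection; this requires tracing the Clifford-theoretic correspondence between $\Irr(b)$ and $\Irr(B)$ (together with Harris--Kn\"orr) through the lift, and it is here that the rigidity provided by abelian defect groups (via~\cite{el18}) is decisive. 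I expect this to be the main obstacle and the source of the abelian restriction that pervades the remainder of the paper.
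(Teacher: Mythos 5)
The paper's own ``proof'' is essentially a list of citations: parts (i)--(iv) and (vi) are deferred to specific results in~\cite{el18}, while (v) is declared clear from the definition and (vii) is derived from (i) and (ii). Your proposal instead attempts to reconstruct the underlying arguments, so it is a genuinely different route, and most of your sketches are in the right spirit. However, there is one outright error and one place that glosses over a real subtlety.

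The error is in (vii). You assert that the identity on $\cO G$, viewed as a map $B_0\to B_0^{(p)}$ for a principal block $B_0$, is ``already Galois-equivariant on characters,'' and conclude $\SOF(B_0)\leq 1$. This is false. The identity automorphism of $B_0$ induces the \emph{identity} bijection on $\Irr(B_0)$, whereas the definition of $\SOF$ demands the bijection $\chi\mapsto\chi^{(p^n)}$; these disagree as soon as $B_0$ contains a character taking irrational $p'$-root-of-unity values, which is the generic situation. What is actually true is that $B_0=B_0^{(p)}$ as blocks of $\cO G$ (because the trivial character is Galois-fixed), so $\OF(B_0)\leq 1$; one then invokes (i) to obtain $\SOF(B_0)\leq (|D|^2)!$, and (ii) transports this to any Morita-equivalent block. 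This is precisely the paper's derivation of (vii), and it produces the stated bound $(|D|^2)!$, not $1$. Your stronger claim would, if correct, substantially sharpen the bounds downstream in Theorem~4.2, which is a warning sign that it is too good to be true.

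A secondary concern is (ii). The definition of $\SOF$ requires an $\cO$-\emph{algebra isomorphism} $C\to C^{(p^n)}$ with the prescribed character bijection, not merely a Morita equivalence. Transporting the witnessing isomorphism $\psi\colon B\to B^{(p^n)}$ along the bimodule $M$ and its twist produces a Morita self-equivalence of the pair $(C,C^{(p^n)})$, but not directly an algebra isomorphism. To close this gap one should pass to basic $\cO$-algebras (Morita-equivalent $\cO$-orders have isomorphic basic algebras) and verify that the induced character bijection survives that reduction; this is the content of~\cite[Proposition~3.12]{el18} and is elided in your one-sentence sketch. The remaining parts -- (i), (iv), (v), and your high-level plan for (iii) and (vi) -- are consistent with what one would expect, though (iii) in particular is left at the level of a programme rather than a proof, as you acknowledge.
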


\begin{proof}
\begin{enumerate}[(i)]
\item The first two inequalities should be clear from the definitions and the final inequality is in~\cite[Proposition 3.11]{el18}.
\item This is~\cite[Proposition 3.12]{el18}.
\item This is~\cite[Theorem 3.16]{el18}.
\item This is~\cite[Proposition 3.17]{el18}.
\item This is clear from the definition.
\item This is~\cite[Lemma 5.1]{el18}.
\item This follows from (i) and (ii).
\end{enumerate}
\end{proof}


The reason we work with the Morita Frobenius and strong $\cO$-Frobenius numbers is the following, which allows us to divide Donovan's conjecture into two parts. We state Kessar's result in a slightly stronger form than that given in~\cite{ke05}, but the proof is the same.

\begin{thm}\cite[Theorem 1.4]{ke05}
\label{kessar}
Consider a class $\mathcal{X}$ of blocks of finite groups with defect group isomorphic to $P$. To prove Donovan's conjecture for all $k$-blocks in $\mathcal{X}$ it is enough to bound $\mf(B)$ and all the Cartan invariants of all blocks in $\mathcal{X}$.
\end{thm}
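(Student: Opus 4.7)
The plan is to pass to basic algebras and then apply a Frobenius-descent argument of Lang--Steinberg type. Since two blocks are Morita equivalent if and only if their basic algebras are isomorphic as $k$-algebras, it is enough to show that only finitely many isomorphism classes of basic algebras $A(B)$ arise as $B$ ranges over $\mathcal{X}$. Now $\dim_k A(B) = \sum_{i,j} c_{ij}(B)$, and by the Brauer--Feit theorem $k(B)$ is bounded purely in terms of $|P|$, so $l(B) \leq k(B)$ is uniformly bounded as well. Together with the hypothesis $c_{ij}(B) \leq c$, this yields a uniform bound $\dim_k A(B) \leq N = N(P,c)$.

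Next I would set up the descent as follows. Let $\mathcal{V} = \mathrm{Alg}_N$ denote the affine variety over $\mathbb{F}_p$ whose $k$-points parametrise the unital associative $k$-algebra structures on a fixed $N$-dimensional vector space, equipped with its natural $\mathrm{GL}_N$-action whose orbits are exactly the $k$-algebra isomorphism classes. Reading off structure constants, one checks that the geometric Frobenius $F \colon \mathcal{V}(k) \to \mathcal{V}(k)$ sends the point representing an algebra $A$ to a point representing $A^{(p)}$ in the sense of Definition \ref{twist:def}. The assumption $\mf(kB) \leq m$, combined with the facts that basic algebras are Morita invariant and that the twist commutes with passage to basic algebras, yields a $k$-algebra isomorphism $A(B) \cong A(B)^{(p^m)}$. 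Writing $x_B \in \mathcal{V}(k)$ for a point representing $A(B)$, we then have $F^m(x_B) \in \mathrm{GL}_N(k) \cdot x_B$. The Lang--Steinberg theorem applied to the connected group $\mathrm{GL}_N$ and the endomorphism $F^m$ now produces $h \in \mathrm{GL}_N(k)$ with $h \cdot x_B$ fixed by $F^m$, i.e.\ $h \cdot x_B \in \mathcal{V}(\mathbb{F}_{p^m})$. Hence each $A(B)$ is extended from an $\mathbb{F}_{p^m}$-algebra, and since $\mathcal{V}(\mathbb{F}_{p^m})$ is a finite set, only finitely many isomorphism classes of basic algebras $A(B)$ appear.

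The main obstacle is essentially bookkeeping. One must verify carefully that (i) the geometric Frobenius on $\mathcal{V}$ really does implement the twist $(-)^{(p)}$ of Definition \ref{twist:def}, and (ii) that a Morita equivalence $kB \sim_{\operatorname{Mor}} (kB)^{(p^m)}$ transports faithfully to a $k$-algebra isomorphism of the corresponding basic algebras. The substantive inputs---the Brauer--Feit bound on $k(B)$ and the surjectivity of $g \mapsto g^{-1} F^m(g)$ on the connected algebraic group $\mathrm{GL}_N$---are classical.
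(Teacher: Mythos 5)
The paper does not give a proof of this statement; it simply cites Kessar's Theorem 1.4 in \cite{ke05} and notes that the proof there carries over verbatim to the (slightly stronger) subclass version. Your proposal reconstructs what is essentially Kessar's argument, and the main ideas are right: Brauer--Feit bounds $k(B)$ and hence $l(B)$, so the Cartan bound caps $\dim_k A(B)$; one then descends via the Lang--Steinberg theorem on $\mathrm{GL}_N$ to get a representative of each isomorphism class rational over a fixed finite field, of which there are finitely many. The two points you flag at the end (that geometric Frobenius realises the twist of Definition \ref{twist:def}, and that the twist commutes with passage to basic algebras while Morita equivalence is detected by the basic algebra) are indeed the things to check, and both do check out.

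One genuine but easily repaired slip: from $\mf(kB)\leq m$ you cannot directly conclude $A(B)\cong A(B)^{(p^{m})}$, since the definition only gives $A(B)\cong A(B)^{(p^{\mf(kB)})}$, and $\mf(kB)\leq m$ does not imply $\mf(kB)\mid m$. You should iterate the Morita equivalence to get $A(B)\cong A(B)^{(p^{M})}$ for $M=m!$ (or $M=\lcm(1,\dots,m)$), which is a uniform multiple of every possible $\mf(kB)$, and then run the Lang--Steinberg argument with $F^{M}$ in place of $F^{m}$; the conclusion is that each $A(B)$ has a representative in $\mathcal{V}(\mathbb{F}_{p^{M}})$. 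A second, purely cosmetic, bookkeeping point: $\dim_k A(B)$ is bounded above by $N$ rather than equal to $N$, so one should either pad the algebra or take the finite union of the varieties $\mathrm{Alg}_n$ for $n\leq N$. With these adjustments the argument is sound, and it is the same descent-to-a-finite-field strategy used in \cite{ke05}.
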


As mentioned in the introduction, we work with the strong $\cO$-Frobenius number rather than the Morita Frobenius number in order to compare blocks with those of normal subgroups of index $p$ (as in Proposition \ref{morita_O_proposition}). Since Theorem \ref{kessar} is only known to hold for $k$-blocks at present, despite initially working with $\cO$-blocks we are only able to draw conclusions about Donovan's conjecture for $k$-blocks.



\section{Reductions for Donovan's conjecture}

We begin with a result derived from the proof of the main result of~\cite{kk96} which states that abelian defect groups act as inner automorphisms on stable blocks of normal subgroups.

\begin{prop}
\label{abelian_defect_action:prop}
Let $G$ be a finite group and $B$ a block of $kG$ with abelian defect group $D$, $N\lhd G$ and $b$ a $G$-stable block of $kN$ covered by $B$. Then $D\leq G[b]$.
\end{prop}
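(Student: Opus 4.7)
The plan is to extract and generalise the key technical ingredient from the proof of the main theorem of~\cite{kk96}. Set $D_0 = D \cap N$; since $b$ is $G$-stable and covered by $B$, one may assume $D_0$ is a defect group of $b$, and $D_0 \lhd D$ because $D$ is abelian. Let $A$ be a source algebra of $b$ with respect to $D_0$, viewed as an interior $D_0$-algebra via a fixed embedding $i \colon kD_0 \hookrightarrow A$. Recall that $G[b]$, in the K\"ulshammer--Puig terminology used in~\cite{kk96}, consists of those $g \in G$ whose conjugation action on $b$ becomes an inner automorphism after transfer to $A$; so the target is to show that conjugation by each $d \in D$ induces an inner automorphism of $A$.

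First I would check that each $d \in D$ acts on $b$ by conjugation (using $D \leq N_G(N)$ and the $G$-stability of $b$) and transfer this to an algebra automorphism $\alpha_d$ of $A$ via the standard Morita equivalence between $b$ and $A$. The key observation then is that because $D$ is abelian, $d$ centralises $D_0$, so after a suitable normalisation $\alpha_d$ fixes the image $i(D_0)$ pointwise. Finally I would invoke a standard source-algebra rigidity theorem (due to Puig) which says that any automorphism of the interior $D_0$-algebra $A$ restricting to the identity on $i(D_0)$ is necessarily inner; this yields $d \in G[b]$, and hence $D \leq G[b]$.

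The main obstacle will be cleanly executing the middle step: the transfer of conjugation to $A$ is only well-defined up to inner automorphisms of $A$, so some care is needed to choose a representative of $\alpha_d$ which is genuinely the identity on $i(D_0)$, and to invoke Puig's rigidity in precisely the form required. These technical points are exactly what the proof in~\cite{kk96} handles for the special case of a split extension of index $p$; the present assertion is the underlying statement that their argument in fact establishes in this greater generality, and the write-up should amount to isolating that part of their proof and noting that the splitting and index-$p$ hypotheses are nowhere used in it.
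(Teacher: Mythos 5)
Your plan is to re-derive, in greater generality, the source-algebra argument that underlies the proof of the main theorem of~\cite{kk96}; the paper instead makes a short vertex-theoretic reduction so that it can simply \emph{cite} the first half of that proof. Concretely, the paper passes to $H := DN$, notes that $b$ is $H$-stable with $[H:N]$ a $p$-power so there is a unique block $B_H$ of $kH$ covering $b$ with $e_{B_H} = e_b$, and then uses the existence of an indecomposable $B$-module $M$ with vertex $D$ to show, via a summand of $M\downarrow_H$, that $B_H$ has defect group exactly $D$; once $B_H$ is known to have abelian defect groups and $H/N$ is a $p$-group, the argument of~\cite{kk96} applies verbatim to give $H = H[b]$ and hence $D \leq G[b]$. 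Your route avoids the detour through $H$ but commits you to checking, as you acknowledge, that the~\cite{kk96} argument does not secretly use the $p$-group-quotient hypothesis. In particular, the ``suitable normalisation'' step is more than cosmetic: you must show that $d$ stabilises the local point of $D_0$ on $kNe_b$ determining the source algebra, so that $i$ and $i^d$ are conjugate by a unit of $(kNe_b)^{D_0}$ and the transferred automorphism is genuinely a $D_0$-interior algebra automorphism. You should also state the rigidity input precisely: the relevant fact is that $\Out_{D_0}(A)$ for a block source algebra over $k$ is a $p'$-group (indeed trivial, embedding in $\Hom(D_0,k^\times)$), which combined with the $p$-power order of $d$ yields innerness; the blanket assertion that every $D_0$-interior automorphism is inner needs this justification. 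Your strategy of isolating the relevant part of~\cite{kk96} is viable, but as written the sketch is some distance from a complete proof, whereas the paper's reduction renders the citation clean and self-contained.
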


\begin{proof}
We set $H:=DN\leq G$. Certainly $b$ is $H$-stable and so, as $[H:N]$ is a power of $p$, there is a unique block $B_H$ of $kH$ covering $b$ and $e_{B_H}=e_b$. Since $b$ is $G$-stable it is the unique block of $kN$ covered by $B$ and so $e_b$ acts as the identity on any $B$-module. Therefore, as $e_{B_H}=e_b$, the restriction of any $B$-module to $H$ lies entirely in $B_H$. Now let $M$ be an idecomposable $B$-module with vertex $D$, the existence of which is guaranteed by~\cite[$\S14$ Corollary 5]{alp}. Then, since $D\leq H$, there must exist a summand of $M\downarrow_H$ with vertex $D$. Therefore $B_H$ must have a defect group $Q$ containing $D$ and, since $B_H$ covers $b$ and $b$ has defect group $D\cap N$, we must have that $|Q\cap N|=|D\cap N|$. Therefore
\begin{align*}
|Q|=[Q:Q\cap N].|Q\cap N|=[QN:N].|D\cap N|=[DN:N].|D\cap N|=|D|
\end{align*}
and so $Q=D$. We have just proved that $B_H$ has abelian defect group and so, by the first half of the proof of the main theorem in~\cite{kk96}, $H=H[b]$ and $D\leq G[b]$.
\end{proof}

In~\cite{du04} Donovan's conjecture for abelian $p$-groups is reduced to groups with a normal central product of normal quasisimple groups and solvable quotient. A major problem in reducing further is to eliminate the case that there is a normal subgroup of index $p$. To reduce all the way to quasisimple groups we consider Conjectures \ref{Donovan:conj} and \ref{Kessar:conj} separately, and show that the question of bounds on the Morita-Frobenius number may be reduced to quasisimple groups. Since bounding strong $\cO$-Frobenius numbers is stronger than bounding Morita Frobenius numbers, we are not showing that Donovan's conjecture may be reduced to quasisimple groups for abelian $p$-groups.

First we give the reduction to blocks of $kG$ for groups $G$ with certain constraints. This is extended from~\cite{du04}. Recall that a block is quasiprimitive if for every normal subgroup, it covers a unique (i.e., stable) block. In the following, recall that the Fitting subgroup $F(G)$ is the product of the $O_r(G)$ for all primes $r$. The layer $E(G)$ of $G$ is the product of the components of $G$ and the generalized Fitting subgroup $F^*(G)$ is $E(G)F(G)$.

\begin{prop}
\label{duvel}
Let $P$ be an abelian $p$-group for a prime $p$. In order to verify Donovan's conjecture for $P$, it suffices to verify that there are only a finite number of Morita equivalence classes of blocks $B$ of $kG$ with defect group $D \cong P$ for finite groups $G$ satisfying the following conditions:
\begin{enumerate}[(i)]
\item $F(G)=Z(G)=O_p(G)O_{p'}(G)$;
\item $O_{p'}(G) \leq [G,G]$;
\item $G= \langle D^g:g \in G \rangle$;
\item every component of $G$ is normal in $G$;
\item if $H \leq G$ is a component, then $H \cap D \neq Z(H) \cap D$;
\item if $H$ is any characteristic subgroup of $G$, then $B$ covers a unique (i.e., $G$-stable) block $b$ of $kH$ and further $G[b]=G$.
\end{enumerate}
\end{prop}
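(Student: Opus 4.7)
The plan is to start from D\"uvel's reduction~\cite{du04}, which already delivers conditions (i)--(v), and then add (vi) as an extra step using Proposition~\ref{abelian_defect_action:prop}. I argue by induction on $|G|$ that every block $B$ of $kG$ with abelian defect $D\cong P$ is Morita equivalent to a block $B'$ of some $kG'$ where $G'$ satisfies all of (i)--(vi); together with~\cite{du04} this proves the proposition.

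By~\cite{du04}, I may assume at the outset that $G$ satisfies (i)--(v). If (vi) also holds, there is nothing to do. Otherwise there is a characteristic subgroup $H\lhd G$ such that either (a) $B$ covers more than one block of $kH$, or (b) $B$ covers a unique block $b$ of $kH$ but $G[b]<G$.

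In case (a), the blocks of $kH$ covered by $B$ form a single $G$-orbit of size greater than one, and Fong--Reynolds produces a Morita equivalence between $B$ and a block $B'$ of the proper subgroup $\Stab_G(b)$, having the same defect group $D$. Re-applying~\cite{du04} to $B'$ and invoking the inductive hypothesis supplies the required block satisfying (i)--(vi). In case (b), Proposition~\ref{abelian_defect_action:prop} applies, since $D$ is abelian and $b$ is $G$-stable, giving $D\leq G[b]$. Because $H$ is characteristic in $G$ and $b$ is $G$-stable, $G[b]$ is normal in $G$ by its intrinsic definition in~\cite{kk96}, so every $G$-conjugate of $D$ lies in $G[b]$; condition (iii) then forces $G=\langle D^g:g\in G\rangle\leq G[b]$, i.e.\ $G=G[b]$, contradicting the assumption. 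Hence case (b) cannot arise, and case (a) is handled by the inductive step.

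The main obstacle is to make sure the induction on $|G|$ is well-founded after Fong--Reynolds and a further invocation of~\cite{du04}: one has to check that D\"uvel's procedure applied to $B'$ does not enlarge $|G|$ back beyond $|\Stab_G(b)|$, so that strict descent is maintained. A secondary technical point is the normality of $G[b]$ in $G$ when $H$ is characteristic and $b$ is $G$-stable, which must be extracted from the definition in~\cite{kk96}.
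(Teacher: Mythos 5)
Your case (b) argument is sound and matches the paper's use of Proposition~\ref{abelian_defect_action:prop} together with the normality of $G[b]$ and condition (iii). The problem is case (a) and the induction you build around it: as you yourself flag, after Fong--Reynolds replaces $B$ by a Morita-equivalent block of $k\Stab_G(b)$ you must re-apply D\"uvel's reduction to restore conditions (i)--(v), and D\"uvel's procedure involves passing to central extensions and cover groups, which can strictly increase the group order. There is no reason the resulting group is smaller than $G$, so the induction on $|G|$ is not well-founded as stated, and you have not supplied a substitute measure that decreases. This is a genuine gap, not a technicality you can defer.

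The paper sidesteps the issue entirely by going one level deeper into D\"uvel's argument: instead of citing his Theorem~1.11 as a black box, it uses his Proposition~1.12, which produces a \emph{quasiprimitive} block $B_1$ of some $kG_1$ Morita equivalent to $B_0$, and then takes $G:=\langle {}^gD_1 : g\in G_1\rangle \lhd G_1$ with $B$ the unique block of $kG$ covered by $B_1$. Because $G\lhd G_1$, any characteristic subgroup $H$ of $G$ is normal in $G_1$, so quasiprimitivity of $B_1$ immediately forces $B$ (being covered by $B_1$) to cover a unique block of $kH$. The first half of (vi) is therefore automatic, no Fong--Reynolds step and no induction are needed, and the second half is handled exactly as in your case (b). If you want to salvage your route, you would need to show that D\"uvel's construction can be done without increasing the order past $|\Stab_G(b)|$, or find a different well-ordering; but it is cleaner and truer to the source to extract the quasiprimitivity from Proposition~1.12 directly.
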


\begin{proof}
Parts (i)-(v) are~\cite[Theorem 1.11]{du04}, and for part (vi) we examine its proof a little further. Now~\cite[Theorem 1.11]{du04} is a consequence of~\cite[Proposition 1.12]{du04}, which says that if $G_0$ is a finite group and $B_0$ is a block of $kG_0$ with defect group $D_0 \cong P$, then there is a finite group $G_1$ and a quasiprimitive block $B_1$ of $kG_1$ with defect group $D_1 \cong P$ and $B_1$ Morita equivalent to $B_0$ such that the unique block $B$ of $kG$, where $G:= \langle {}^gD_1:g \in G_1 \rangle$, satisfies conditions (i)-(v) (noting that $D_1$ is also a defect group for $B$). Since $B_1$ is quasiprimitive, the first part of (vi) holds for $B$. Let $H$ be a characteristic subgroup of $G$ and let $b$ be the unique block of $kH$ covered by $B_1$ (and $B$). Now by Proposition \ref{abelian_defect_action:prop} $D \leq G[b]$, and since $G[b]\lhd G$, (iii) implies that $G=G[b]$.
\end{proof}

\begin{defi}
We call a block $B$ of $\cO G$, where $G$ is a finite group, \emph{reduced} if it satisfies the conditions of Proposition \ref{duvel}.
\end{defi}

\bigskip

{\noindent \emph{Proof of Theorem \ref{reduce:theorem}.}}

Suppose that there is a function $s:\mathbb{N}\to\mathbb{N}$ such that $\SOF(b) \leq s(d)$ for all $\cO$-blocks $b$ of quasisimple groups with abelian defect group of order $p^d$.

Let $B$ be a block of $\cO G$ with abelian defect group $D$ such that $kB$ satisfies conditions (i)-(vi) of Proposition \ref{duvel}. We examine the structure of $G$. Write $L_1,\ldots,L_t$ for the components, so the layer $E(G)=L_1 \cdots L_t$. The generalized Fitting subgroup $F^*(G)$ is $E(G)F(G)$, and by~\cite[31.13]{asc00} $C_G(F^*(G)) \leq F^*(G)$, so $C_G(F^*(G))=Z(F^*(G))=Z(G)$ since $F(G)=Z(G)$. Writing $\overline{G}=G/Z(G)$, we have $\overline{G} \leq \Aut(\overline{L_1} \times \cdots \times \overline{L_1})$. Since $L_i \lhd G$ for each $i$, we further have $\overline{G} \leq \Aut(\overline{L_1}) \times \cdots \times \Aut(\overline{L_t})$. Since by Schreier's conjecture each $\Out(L_i)$ is solvable, we have $G/E(G)$ solvable.

Consider the series $$1 \lhd O_p(G/E(G)) \lhd O_{p,p'}(G/E(G)) \lhd O_{p,p',p}(G/E(G)) \lhd \cdots \lhd G/E(G)$$ as defined in~\cite[6.3]{gor}, and write $$E(G) = G_0 \lhd G_1 \lhd \cdots \lhd G_n=G$$ for the series of preimages in $G$. Note that $G_i$ is a characteristic subgroup of $G$ for each $i$. Write $B_i$ for the unique block of $\cO G_i$ covered by $B$. If $B_{i+1}/B_i$ is a $p$-group, then by Proposition \ref{morita_O_proposition} $\SOF(B_{i+1}) \leq \SOF(B_i)$. If $B_{i+1}/B_i$ is a $p'$-group, then since $G_{i+1}[B_i]=G_{i+1}$ by~\cite[Proposition 2.2]{kkl12} $B_i$ and $B_{i+1}$ are Morita equivalent, and so by Proposition \ref{morita_O_proposition} $\SOF(B_{i+1}) = \SOF(B_i)$. We have shown that $\SOF(B) \leq \SOF(B_0)$. Now $E(G)$ is a central product of $L_1,\ldots,L_t$. Write $b_i$ for the unique block of $\cO L_i$ covered by $B_0$. There is a subgroup $Z$ of the centre of $H:=L_1 \times \cdots \times L_t$ such that $E(G) \cong H/Z$. Let $B_H$ be the unique block of $\cO H$ with $Z_{p'}:=O_{p'}(Z)$ in its kernel corresponding to $B_0$ via the natural homomorphisms $\cO G \rightarrow \cO (G/Z_{p'}) \rightarrow \cO (G/Z)$. Then by Proposition \ref{morita_O_proposition} $\SOF(B_0) \leq \SOF(B_H)$. Now $B_H = b_1 \otimes \cdots \otimes b_t$, and so by Proposition \ref{morita_O_proposition} $$\SOF(B) \leq \SOF(B_H) \leq \lcm\{\SOF(b_1),\ldots,\SOF(b_t)\} \leq \prod_{i=1}^t \SOF(b_i) \leq \prod_{i=1}^t s(\log_p(|D_i|)),$$ where $D_i = L_i \cap D$ is a defect group for $b_i$. We have shown that $\SOF(B)$ is bounded in terms of $D$. Hence by Proposition \ref{morita_O_proposition} $\mf(B)$ is bounded in terms of $D$ for all reduced blocks $B$.

We have assumed that the Cartan invariants of the blocks of quasisimple groups with abelian defect groups are bounded in terms of the defect. Then by~\cite[Theorem 3.2]{du04} (together with the observation that the Cartan invariants are bounded in terms of the dimensions of the Ext spaces for pairs of simple modules and the Loewy length) the Cartan invariants of any block with abelian defect groups are bounded in terms of the defect, and so in particular this holds for the class of reduced blocks. Hence by Theorem \ref{kessar} Donovan's conjecture holds for reduced blocks and by Proposition \ref{duvel} we are done.





\begin{cor}
For a fixed prime $p$, if Donovan's conjecture holds for $\cO$-blocks of quasisimple groups with abelian defect groups, then it holds for all $k$-blocks with abelian defect groups.
\end{cor}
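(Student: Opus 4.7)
The plan is to deduce the corollary directly from Theorem \ref{reduce:theorem} by verifying that its two hypotheses, a uniform bound on $\SOF$ and on Cartan invariants for $\cO$-blocks of quasisimple groups with abelian defect of a given order, follow for free from the assumption that there are only finitely many Morita equivalence classes in each such family.

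More concretely, fix $d \in \NN$ and consider the class $\mathcal{C}_d$ of $\cO$-blocks of quasisimple groups whose defect groups are abelian of order $p^d$. By assumption, $\mathcal{C}_d$ contains only finitely many Morita equivalence classes; choose representatives $B_1,\ldots,B_{n_d}$. By Proposition \ref{morita_O_proposition}(ii), $\SOF$ is a Morita invariant, so every $B \in \mathcal{C}_d$ satisfies $\SOF(B) = \SOF(B_i)$ for some $i$, and therefore $\SOF(B) \leq s(d)$ where $s(d) := \max_i \SOF(B_i)$. The Cartan invariants are likewise Morita-invariant (the Cartan matrix is, up to simultaneous permutation of rows and columns, an invariant of $\mod(B)$), so setting $c(d)$ to be the maximum Cartan entry among $B_1,\ldots,B_{n_d}$ gives a uniform bound in $\mathcal{C}_d$.

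The two functions $s,c:\NN \to \NN$ produced this way meet the hypotheses of Theorem \ref{reduce:theorem}, and so Donovan's conjecture holds for $k$-blocks with abelian defect groups. I do not anticipate a substantive obstacle: the entire argument is just the observation that both invariants in question are constant on Morita equivalence classes, so finiteness of classes immediately supplies the required uniform bounds. The only thing to be slightly careful about is to remember that the reduction passes through $\cO$-blocks throughout, and that Theorem \ref{reduce:theorem} itself is stated with exactly this mismatch in mind (hypothesis on $\cO$-blocks, conclusion on $k$-blocks), which is why the corollary can say no more than it does.
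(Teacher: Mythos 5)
Your proof is correct and is precisely the argument the paper intends: the corollary has no separate proof in the paper because it is an immediate consequence of Theorem \ref{reduce:theorem}, obtained exactly as you say by noting that $\SOF$ (Proposition \ref{morita_O_proposition}(ii)) and the Cartan matrix are Morita invariants, so finiteness of Morita classes supplies the bounding functions $s$ and $c$. The only implicit step worth flagging, which you handle correctly by fixing $d$ and working with $\mathcal{C}_d$, is that one must also use the finiteness of isomorphism types of abelian $p$-groups of order $p^d$ to pass from ``finitely many classes per defect group $P$'' to a bound depending only on $d$.
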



\section{Donovan's conjecture for abelian $2$-groups}

\begin{thm}[\cite{ekks14}]
\label{component_classification}
Let $G$ be a quasi-simple group.  If $B$ is a $2$-block of $\mathcal{O}G$ with  abelian defect group $D$,  then one (or more) of the following holds:

(i) $G/Z(G)$ is one of  $A_1(2^a)$,   $\,^2G_2(q)$ (where $q \geq 27$ is a power of $3$ with odd exponent), or $J_1$,  $B$  is the principal block and $D$ is elementary abelian;

(ii) $G$  is $Co_3 $,   $B$ is a non-principal block, $D \cong C_2 \times C_2 \times  C_2 $ (there is one such block);

(iii) $B$ is a nilpotent covered block;

(iv) $G$ is of type $D_n(q)$ or $E_7(q)$, where $n=2t$ for $t$ odd and $q$ is a power of an odd prime. $B$ is Morita equivalent  to a block  $C$ of a   subgroup   $L=L_0 \times L_1$   of $G$   as  follows:   The  defect groups   of $C$ are isomorphic to  $D$,    $L_0$   is abelian  and the  $2$-block of  $L_1 $ covered by $C$ has  Klein $4$-defect groups.

(v) $B$ has Klein four defect groups.
\end{thm}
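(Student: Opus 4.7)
The plan is to invoke the classification of finite simple groups and handle each family of quasisimple groups in turn. I would first reduce to the case where $G/Z(G)$ is simple, and then split into alternating, sporadic, and Lie type, with the Lie type case further subdivided into defining characteristic $2$ and cross-characteristic (where $G$ is of Lie type in odd characteristic). The outputs (i)--(v) suggest a trichotomy in the structure of the answer: cases (i) and (ii) are exceptional ``sporadic-looking'' blocks with elementary abelian defect coming from rank-one-like behaviour; case (iv) is the genuinely Lie-theoretic family; cases (iii) and (v) are the catch-all small-defect or nilpotent-covered cases into which everything else should fit.

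For alternating groups and their covers, I would use Nakayama's conjecture and the combinatorics of $2$-cores: a $2$-block of $S_n$ with abelian defect forces either trivial defect or Klein four defect, and passage to $A_n$ and its double cover adds no new examples of abelian defect beyond Klein four, so these feed into (v). For the sporadic groups and the Tits group I would perform a direct inspection, using \textsc{Atlas} data together with the published decomposition matrices and Brauer trees, and read off the blocks with abelian defect. All such blocks turn out to be nilpotent covered, of Klein four defect, or one exceptional block of $Co_3$ with defect $C_2 \times C_2 \times C_2$.

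For Lie type in defining characteristic $2$, the only blocks of positive defect are of maximal defect, so $D$ is a Sylow $2$-subgroup of $G$; this is abelian only in Lie rank one, and among those only $A_1(2^a)$ (plus the related $J_1$ and $\,^2G_2(q)$, whose Sylow $2$-subgroups are elementary abelian of order $8$) give abelian Sylow $2$-subgroups, yielding (i). For Lie type in cross-characteristic I would invoke the $e$-Harish-Chandra parametrisation of blocks by $e$-cuspidal pairs $(L,\lambda)$, with $e$ the order of $q$ modulo $2$, and Bonnafé--Rouquier Jordan decomposition of blocks to reduce to quasi-isolated blocks of the dual group. A defect-group analysis of the Sylow $2$-subgroup of the centraliser controlling the block picks out precisely the $D_n(q)$ and $E_7(q)$ families in (iv), where a Morita equivalence to a Levi block splits off an abelian factor $L_0$ and a residual factor with Klein four defect; all other cross-characteristic blocks with abelian defect turn out to be either nilpotent covered (case (iii)) or already of Klein four defect (case (v)).

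The main obstacle is the cross-characteristic Lie type analysis: one has to combine the classification of $e$-cuspidal pairs with abelian defect, Jordan decomposition to pass from the original quasisimple group to a quasi-isolated situation in the dual, and then upgrade the resulting character-level correspondence to an actual Morita equivalence realising the Levi structure in (iv). This last step is the delicate one and is where the specific arithmetic restrictions on $D_n(q)$ ($n=2t$, $t$ odd) and on $E_7(q)$ in (iv) genuinely enter, since outside those conditions either the defect group is nonabelian or the relevant block becomes nilpotent covered and is absorbed into (iii).
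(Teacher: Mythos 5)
The paper does not actually prove this theorem: its entire ``proof'' is the sentence ``This is taken from Propositions 5.3 and 5.4 and Theorem 6.1 of [ekks14].'' So there is no argument in the present paper against which to measure yours; the theorem is imported wholesale from the Eaton--Kessar--K\"ulshammer--Sambale paper. What you have written is a blind reconstruction of the strategy of that external paper, and as an outline it matches it reasonably well: CFSG casework, alternating groups via weights and $2$-cores, sporadics by inspection of decomposition data, defining-characteristic Lie type via the ``only maximal defect'' observation forcing abelian Sylow $2$, and cross-characteristic Lie type via Bonnaf\'e--Rouquier Jordan decomposition together with $e$-Harish-Chandra theory. That is indeed the architecture of [ekks14], and you correctly identify the cross-characteristic Lie type analysis, and the upgrade from a character-level parametrisation to a Morita equivalence in case (iv), as the genuinely hard step.

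A few small inaccuracies worth flagging. For $S_n$ itself, a $2$-block of weight $w$ has defect group $\mathrm{Syl}_2(S_{2w})$, which is abelian only for $w\le 1$; so the abelian-defect $2$-blocks of $S_n$ have defect $1$ or $C_2$, not Klein four. Klein four defect genuinely arises only after passing to $A_n$ (where a weight-$2$ block can have defect group $\mathrm{Syl}_2(A_4)\cong C_2\times C_2$), and the double covers need separate treatment via spin blocks; your formulation elides this. Also, $J_1$ and $\,^2G_2(q)$ are not defining-characteristic-$2$ groups (the former is sporadic, the latter has defining characteristic $3$); they belong in your sporadic and cross-characteristic branches respectively, and land in conclusion (i) only because all three families happen to share elementary abelian Sylow $2$-subgroups of order $8$. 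Finally, note that what you have given is an outline of a multi-section argument, not a proof; given that the paper itself merely cites, that is an appropriate level of detail here, but it does not constitute an independent verification of the theorem.
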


\begin{proof}
This is taken from Propositions 5.3 and 5.4 and Theorem 6.1 of~\cite{ekks14}.
\end{proof}

\begin{thm}
Donovan's conjecture holds for abelian $2$-groups.
\end{thm}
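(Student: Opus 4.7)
The plan is to apply Theorem \ref{reduce:theorem} with $p=2$, so that it suffices to exhibit functions $s,c\colon\NN\to\NN$ for which $\SOF(B)\leq s(d)$ and every Cartan entry of $B$ is at most $c(d)$ whenever $B$ is an $\cO$-block of a quasisimple group with abelian defect group $D$ of order $2^d$. These bounds will be extracted case by case from the five alternatives of Theorem \ref{component_classification}, with each case ultimately reducing either to a crude bound furnished by Proposition \ref{morita_O_proposition} or to the resolution of Donovan's conjecture for elementary abelian $2$-groups proved in \cite{ekks14}.

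Cases (i) and (ii) are essentially immediate. In case (i), $B$ is the principal block, so Proposition \ref{morita_O_proposition}(vii) at once yields $\SOF(B)\leq (|D|^2)!$; as for the Cartan entries, the defect order determines the group in $A_1(2^a)$ (forcing $a=d$) and is fixed at $8$ for the Ree family and $J_1$, and within each fixed family the principal $2$-blocks are controlled (in particular they fall into finitely many Morita equivalence classes for each $d$), so a uniform Cartan bound is automatic. Case (ii) is a single block with $D\cong C_2^3$ and requires no argument.

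Case (v), concerning Klein four defect, is subsumed by the proof of Donovan's conjecture for elementary abelian $2$-groups in \cite{ekks14}, giving finitely many Morita equivalence classes and hence uniform bounds on both $\SOF$ and the Cartan entries. This in turn handles case (iv): writing $B$ as Morita equivalent to $C = c_0 \otimes c_1$ where $c_0$ is a block of the abelian group $L_0$ (for which $\SOF$ and the Cartan entries are trivially bounded in terms of $|D\cap L_0|$) and $c_1$ has Klein four defect, Proposition \ref{morita_O_proposition}(ii),(v) together with the multiplicativity of Cartan matrices across blocks of a direct product completes the case. Case (iii) is handled by the structural description of nilpotent covered blocks given in \cite{ekks14}: any such $B$ is Morita equivalent to a block whose $\SOF$ and Cartan entries are explicitly bounded by a function of $|D|$, and one transfers the bounds to $B$ by Proposition \ref{morita_O_proposition}(ii).

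Having dispatched all five cases, Theorem \ref{reduce:theorem} for $p=2$ delivers Donovan's conjecture for abelian $2$-groups. The most delicate step is case (iii): one must invoke the Morita-theoretic treatment of nilpotent covered blocks in a form concrete enough to yield a uniform bound on $\SOF$ (rather than merely on $\mf$) and on the Cartan entries, and this is the point at which the machinery of \cite{ekks14}, as opposed to the general results of Proposition \ref{morita_O_proposition} alone, is genuinely needed.
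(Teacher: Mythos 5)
Your overall plan — invoke Theorem \ref{reduce:theorem} with $p=2$ and run through the five cases of Theorem \ref{component_classification} — is exactly the paper's strategy, and your treatments of cases (i), (ii), (iv), (v) are sound (indeed more explicit about the Cartan bounds than the paper itself). Where you diverge is in the mechanism: the paper observes that in cases (i), (iv) and (v) the block $B$ is Morita equivalent to a \emph{principal} block (for (v) via the classification of Klein four blocks; for (iv) since $c_0$ is a block of an abelian group, hence Morita equivalent to $\cO D_0$, and $c_1$ is a Klein four block), whereupon Proposition \ref{morita_O_proposition}(vii) gives $\SOF(B)\leq(|D|^2)!$ in one stroke. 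Your route through the full Donovan conjecture for elementary abelian $2$-groups and the tensor-product formula (v) is heavier machinery for the same payoff, but it is correct.

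The genuine gap is in case (iii). You write that one must invoke ``the structural description of nilpotent covered blocks given in \cite{ekks14}'' and that ``the machinery of \cite{ekks14}, as opposed to the general results of Proposition \ref{morita_O_proposition} alone, is genuinely needed.'' That is not where the needed input lives, and the resulting argument does not close. The key fact is Puig's theorem \cite{pu11} that a nilpotent covered block is Morita equivalent to a block with a \emph{normal} defect group. Once you have that, Proposition \ref{morita_O_proposition}(ii) transfers $\SOF$ across the Morita equivalence and Proposition \ref{morita_O_proposition}(vi) bounds $\SOF$ of the target block by $n^{\frac{1}{2}(\log_2 n -1)}$ with $n=|\Aut(D)|_{2'}$; the Cartan entries of a block with normal abelian defect group are likewise controlled by $|D|$. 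So the external input is \cite{pu11}, not \cite{ekks14}, and Proposition \ref{morita_O_proposition}(vi) is precisely the ``general result'' that makes the case go through. As written, your case (iii) appeals to a structural description that \cite{ekks14} does not provide in the form you need, leaving that case unproved.
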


\begin{proof}
By Theorem \ref{reduce:theorem} it suffices to show that there is bound for $\SOF(B)$ for $\cO$-blocks $B$ with abelian defect groups of quasisimple groups in terms of the defect group $D$. We check each of the cases in Theorem \ref{component_classification} in turn. In cases (i), (iv) and (v) $B$ is morita equivalent to a principal block, so $\SOF(B) \leq (|D|^2)!$ by Proposition \ref{morita_O_proposition}. Similarly in case (ii), $B=B^{(2)}$ so again we have $\SOF(B) \leq (|D|^2)!$. Finally, if $B$ is nilpotent covered, then by~\cite{pu11} $B$ is Morita equivalent to a block with a normal defect group. Then by Proposition \ref{morita_O_proposition} $\SOF(B) \leq n^{\frac{1}{2} \log_2(n)-1}$, where $n=|\Aut(D)|_{2'}$. The theorem follows.
\end{proof}


\end{document}